\newtheorem{lemma}{Lemma}
\newtheorem{remark}{Remark}
\newtheorem{result}{Result}
\newtheorem{theorem}{Theorem}
\begin{document}

\title{Aposteriori error estimation of Subgrid multiscale stabilized finite element method for transient Stokes model}

\author{Manisha Chowdhury\thanks{ Email addresses: chowdhurymanisha8@gmail.com(M.Chowdhury) } }
      
\date{Indian Institute of Technology Kanpur \\ Kanpur, Uttar Pradesh, India}

\maketitle

\begin{abstract}
In this study, we present a novel stabilized finite element analysis for transient Stokes model. The algebraic subgrid multiscale approach has been employed to arrive at the stabilized coupled variational formulation. Derivation of the stabilized form as well as stability analysis of  it's fully discrete formulation are presented elaborately. Discrete $inf$-$sup$ condition for pressure stabilization has been proven. For the time discretization the fully implicit schemes have been used. A detailed derivation of the aposteriori error estimate for the stabilized subgrid multiscale finite element scheme has been presented. Numerical experiment has been carried out to verify theoretically established order of convergence.
\end{abstract}

\section{Introduction}
For a long period of time different numerical methods such as finite difference, finite volume and finite element methods have been employed to study the Stokes equations, widely used fluid flow mathematical model. This mixed problem does not satisfy $inf$-$sup$ condition for equal-order velocity pressure interpolation and sometimes shows pressure instability. In this regard several stabilization techniques such as the streamline-upwind Petrov–Galerkin (SUPG) \cite{RefE} -\cite{RefG}, the pressure stabilized Petrov–Galerkin (PSPG) \cite{RefH}-\cite{RefI}, the Discontinuous Galerkin \cite{RefJ}, the symmetric pressure stabilized Galerkin method \cite{RefK}, have been introduced to deal with this instability problem. 
In this paper we present Subgrid multicsale stabilized finite element analysis for transient Stokes model. Hughes in \cite{RefC} has introduced the concept of stabilized multiscale subgrid method for Helmholtz equation and further developments are going on afterwards. Generally two approaches of $SGS$ stabilized formulation, namely algebraic approach, abbreviated as $ASGS$ and orthogonal projection approach, known as $OSGS$ method,  have been studied.  $Badia$ and $Codina$ in \cite{RefB} have studied both the approaches for unified Stokes-Darcy fluid flow problem and experimentally established equally well performances of both the stabilized formulations.  In \cite{RefD} Codina presents a study on comparison of stabilized finite element methods viz. $SUPG$, $GLS$, $SGS$, $Taylor-Galerkin$ etc. for solving diffusion-convection-reaction equation and experimentally shows that $SGS$ performs well in compared to other stabilized method. Here we have derived algebraic subgrid scale ($ASGS$) stabilized finite element method for the model using continuous velocities and pressure spaces across the inter-element boundaries.Therefore it is possible to eliminate the jump terms in the variational formulation. \vspace{1mm}\\
In particular the $ASGS$ approach consists of algebraic approximation of the subscales that arise from the decomposition of the exact solution field into resolvable scale and unresolvable scale, have been used for finite element scheme stabilization. Stabilization parameters are derived following the approach in \cite{RefB} for $ASGS$ method. For time discretization fully implicit schemes have been chosen. In this study to ensure pressure stability discrete $inf-sup$ condition has been established for the choice of finite dimensional spaces and the stabilized bilinear form is shown coercive. Furthermore $aposteriori$ error estimate for the current stabilized ASGS finite element method for the unsteady Stokes model has been derived. Generally error estimation provides important information for finding out the convergence rate of the numerical method and it's dependency on the parameters present in the model. We have derived $residual$ based aposteriori error estimate which is computable and very beneficial for studying adaptivity and control of solution error. This estimation provides convergence rates with respect to space and time both and the norm defined here to find the estimation includes the standard norms involving both velocity and pressure variable. Consequently it gives a wholesome information about the convergence of the method. Numerical studies have shown the realization of theoretical order of convergence and the robustness of current stabilized $ASGS$ finite element method for Stokes system. \vspace{1mm}\\
Organisation of the paper is as follows: Section 2  introduces the model and it's weak form. Next section  presents space and time discretized variational formulation. Section 4 describes the derivation of stabilized multiscale subgrid formulation and stability analysis of the fully-discrete form. Next section has elaborately described the derivation of aposteriori error estimation for this subgrid formulation. At last section 6 contains numerical results to verify the numerical performance of the method.       

\section{ Statement of the problem}
Let $\Omega \subset \mathbb{R}^d$, d=2,3 be an open bounded domain with piecewise smooth boundary $\partial \Omega$. For the sake of simplicity in further calculations, we have considered two dimensional model, but it can be easily extended for three dimensional model.  
Let us now introduce here the system of fluid flow in $\Omega$: Find $\textbf{u}$: $\Omega$ $\times$ [0,T] $\rightarrow \mathbb{R}^2$ , $p$: $\Omega \times$ [0,T] $\rightarrow \mathbb{R}$  such that,
\begin{equation}
\begin{split}
\frac{\partial \textbf{u}}{\partial t}- \mu \Delta \textbf{u} + \bigtriangledown p & = \textbf{f} \hspace{2mm} in \hspace{2mm} \Omega \times [0,T] \\
\bigtriangledown \cdot \textbf{u} &= 0 \hspace{2mm} in \hspace{2mm} \Omega \times [0,T] \\
\textbf{u}  &= \textbf{0} \hspace{2mm} on \hspace{2mm} \partial \Omega \times [0,T] \\
\textbf{u} &= \textbf{u}_0 \hspace{2mm} at \hspace{2mm} t=0 \\
\end{split}
\end{equation}
where $(\textbf{u},p)$ is a pair of Stokes velocities and pressure respectively and $\mu$ is the Stokes dynamic viscosity and $\textbf{f}$ is the force term.\\
In operator form the above system of equations can be written as: find \textbf{U}= (\textbf{u},p) 
\begin{equation}
M\partial_t \textbf{U} + \mathcal{L}( \textbf{U}) = \textbf{F}
\end{equation}
where M, a matrix = diag(1,1,0), $\partial_t \textbf{U} = [\frac{\partial \textbf{u}}{\partial t}, \frac{\partial p}{\partial t}]^T$ \\
\[
\mathcal{L} ( \textbf{U})=
  \begin{bmatrix}
   - \mu \Delta \textbf{u} + \bigtriangledown p\\
    \bigtriangledown \cdot \textbf{u} \\
  \end{bmatrix}
 and  \hspace{1mm} \textbf{F}=
  \begin{bmatrix}
    \textbf{f} \\
    0 \\
  \end{bmatrix}
\]
Let us introduce the adjoint $\mathcal{L}^*$ of $\mathcal{L}$ as follows,
\[
\mathcal{L}^* ( \textbf{U})=
  \begin{bmatrix}
   - \mu \Delta \textbf{u}  - \bigtriangledown p\\
    -\bigtriangledown \cdot \textbf{u} \\
  \end{bmatrix}
\]
Now we impose suitable assumptions, that are necessary to conclude the results further, on the coefficients mentioned above. \vspace{1mm}\\
\textbf{(i)} $\mu$ is positive constant.\vspace{1mm}\\
\textbf{(ii)} The spaces of continuous solutions $\textbf{u}=(u_1,u_2)$ and $p$ are taken as the following:
\textbf{(a)} The velocities $u_1,u_2 \in L^{\infty}(0,T;H^2(\Omega))\bigcap$ $C^{0}(0,T; H^1(\Omega))$  \\
 \textbf{(b)} and the pressures $p$ $ \in L^{\infty}(0,T;H^1(\Omega))\bigcap C^{0}(0,T;L^2_0(\Omega)) $. \vspace{2mm}\\
 \textbf{(iii)} Additional assumptions imposed on continuous velocity solution are:  $\textbf{u}_{tt}$ and $\textbf{u}_{ttt}$ are taken to be bounded functions on $\Omega$.\vspace{1mm}\\
\textbf{Weak formulation}:
To present variational formulation let us first introduce the spaces $V_{\textbf{u}}$ and $Q$   for  velocity and pressure respectively in the following:
\begin{equation}
\begin{split}
V_{\textbf{u}} & = \{\textbf{v} \in (H^1(\Omega))^2:  \textbf{v}= \textbf{0} \hspace{1mm} on \hspace{1mm} \partial \Omega \} \\
Q & = \{ q \in L^2(\Omega): \int_{\Omega} q \hspace{1mm} d \Omega=0 \} \\
\end{split}
\end{equation}
The variational formulation of (1) is to find $\textbf{U}=(\textbf{u},p) \in V_{\textbf{u}} \times Q $ $(=V)$ such that $\forall$ $\textbf{V}=(\textbf{v},q) \in V$
\begin{equation}
\begin{split}
(M \partial_t \textbf{U}, \textbf{V})+ a_S (\textbf{u},\textbf{v})-b(\textbf{v},p) +b(\textbf{u},q)=L(\textbf{V})
\end{split}
\end{equation}
where  $(M \partial_t \textbf{U}, \textbf{V})= (\partial_t \textbf{u},\textbf{v})$ ;
$a_{S}(\textbf{u},\textbf{v})= \int_{\Omega_S} \mu \bigtriangledown \textbf{u} : \bigtriangledown \textbf{v}$ ;
$b(\textbf{u},q)=\int_{\Omega} (\bigtriangledown \cdot \textbf{u}) q$ and
$L(\textbf{V})=(\textbf{f}_1,\textbf{v})_S$\vspace{1mm}\\
The above variational formulation can be written in the following compact way: find $\textbf{U} \in V $ such that
\begin{equation}
(M \partial_t \textbf{U}, \textbf{V})+ B(\textbf{U},\textbf{V})= L(\textbf{V}) \hspace{2mm} \forall \textbf{V} \in V
\end{equation}

\section{Discrete formulations}
\subsection{Space discretisation}
 let the domain $\Omega$ be discretized into finite numbers of subdomains $\Omega_k$ for k=1,2,...,$n_{el}$, where $n_{el}$ is the total number element subdomains. Let $h_k$ be the diameter of each sub-domain $\Omega_k$. \vspace{1mm}\\
Let $ h$ = $\underset{k=1,2,...n_{el}}{max} h_k$ and $\tilde{\Omega}= \bigcup_{k=1}^{n_{el}} \Omega_k$ be the union of interior elements.\vspace{1 mm}\\
Let $V_{\textbf{u}}^h$ and $Q^h$ be finite dimensional subspaces of $V_{\textbf{u}}$ and $Q$ respectively such that
$V_{\textbf{u}}^h= \{ \textbf{v} \in V_{\textbf{u}}: \textbf{v}(\Omega_k)= \mathcal{P}^l(\Omega_k)\} $,
 and $Q^h= \{ q \in Q : q(\Omega_k)= \mathcal{P}^{l-1}(\Omega_k)\}$
 \vspace{1 mm}\\
where $\mathcal{P}^l(\Omega_k)$  denotes complete polynomial of order $l$ respectively over each $\Omega_k$ for k=1,2,...,$n_{el}$. \\
The discrete formulation is to find $\textbf{U}_h \in V_{\textbf{u}}^h \times Q^h  $ $(=V_h)$ such that
\begin{equation}
(M \partial_t \textbf{U}_h, \textbf{V}_h)+ B(\textbf{U}_h,\textbf{V}_h)= L(\textbf{V}_h) \hspace{2mm} \forall \textbf{V}_h \in V_h
\end{equation}

\subsection{Time discretisation}
For time discretization let us introduce the following uniform partition of the time interval [0,T]: for time step size $dt$= $\frac{T}{N}$, where $N$ is a positive integer, $n$-th time step $t_n= n dt$ and for given $0 \leq \theta \leq 1$,
\begin{equation}
\begin{split}
f^n & := f(\cdot , t_n) \hspace{4 mm} for \hspace{2 mm} 0 \leq n \leq N\\
f^{n,\theta} & := \frac{1}{2} (1 + \theta) f^{(n+1)} + \frac{1}{2} (1- \theta) f^n \hspace{4mm} for \hspace{2mm} 0\leq n \leq N-1 \\
dt & := t^{n+1}-t^n \\
\partial_t f^{n,\theta} & := \frac{f^{n,\theta}-f^n}{\frac{1+\theta}{2} dt}
\end{split}
\end{equation}
We see for $\theta=0$ the discretization follows Crank-Nicolson formula and for $\theta=1$ it is backward Euler discretization rule. \vspace{1mm}\\
For sufficiently smooth function $f(t)$, using the Taylor series expansion about t= $t^{n,\theta}$, we will have \vspace{1mm}\\
\begin{equation}
\begin{split}
f^{n+1} & = f(t^{n,\theta})+ \frac{(1-\theta)  dt}{2} \frac{\partial f}{\partial t}(t^{n,\theta}) + \frac{(1-\theta)^2 dt^2}{8} \frac{\partial^2 f}{\partial t^2} (t^{n,\theta}) + \mathcal{O}(dt^3)\\
f^{n} & = f(t^{n,\theta})- \frac{(1+\theta) dt}{2} \frac{\partial f}{\partial t}(t^{n,\theta}) + \frac{(1+\theta)^2 dt^2}{8} \frac{\partial^2 f}{\partial t^2}(t^{n,\theta}) + \mathcal{O}(dt^3)
\end{split}
\end{equation}
We have considered here $t^{n,\theta}- t^n= \frac{(1+\theta) dt}{2}$\\
Multiplying the above first and second sub-equations in (14) by $\frac{1+\theta}{2}$ and $\frac{1-\theta}{2}$ respectively and then adding them we will have the following\\
\begin{equation}
f^{n,\theta} = f(t^{n,\theta}) + \frac{1}{8} (1+\theta)(1-\theta) dt^2 \frac{\partial^2 f}{\partial t^2}(t^{n,\theta}) + \mathcal{O}(dt^3)
\end{equation} 
Let $\textbf{u}^{n,\theta},p^{n,\theta}$ be approximations of $\textbf{u}(\textbf{x},t^{n,\theta}), p(\textbf{x},t^{n,\theta})$ respectively. Now by Taylor series expansion \cite{RefL},we have 
\begin{equation}
\begin{split}
\frac{ \textbf{u}^{n+1}-\textbf{u}^n}{dt} & = \textbf{u}_t(\textbf{x},t^{n,\theta}) + TE_1\mid_{t=t^{n,\theta}} \hspace{5mm} \forall \textbf{x} \in \Omega
\end{split}
\end{equation}
where the truncation error $TE_1\mid_{t=t^{n,\theta}}$ $\simeq$ $TE_1^{n,\theta}$  depends upon time-derivatives of the respective variables and $dt$ \cite{RefL}.
Now applying assumption $\textbf{(iii)}$ on $\textbf{u}_{tt}$ and $\textbf{u}_{ttt}$ we will have another property as follows:
\begin{equation}
\begin{split}
\|TE_1^{n,\theta}\| & \leq  \begin{cases}
      \tilde{C}_1 dt  & if \hspace{1mm} \theta=1 \\
    \tilde{C}_2 dt^2 & if \hspace{1mm} \theta=0
      \end{cases}
\end{split}
\end{equation}
According to this rule we need to solve for $\textbf{U}^{n,\theta}_h$, $\forall \hspace{1mm} \textbf{V}_h \in V_h$
\begin{equation}
(M \partial_t \textbf{U}^{n,\theta}_h, \textbf{V}_h)+ B( \textbf{U}_h^{n,\theta}, \textbf{V}_h) = L(\textbf{V}_h) + (\textbf{TE}^{n,\theta},\textbf{V}_h)  \hspace{1mm}
\end{equation}
and the exact solution $\textbf{U}^{n,\theta}$ satisfies the above equation in the following way: $\forall \hspace{1mm} \textbf{V}_h \in V_h$
\begin{equation}
(M \partial_t \textbf{U}^{n,\theta}, \textbf{V}_h)+ B( \textbf{U}^{n,\theta}, \textbf{V}_h) = L(\textbf{V}_h) + (\textbf{TE}^{n,\theta},\textbf{V}_h)  \hspace{1mm}
\end{equation}

\section{Stabilized multiscale formulation}
Now we start deriving stabilized formulation with decomposing additively the exact solution into the resolvable scale $\textbf{U}_h$, which is a finite element solution and unresolvable scale term $\textbf{U}'$, known as subgrid scale.
\begin{equation}
\textbf{U}=\textbf{U}_h+\textbf{U}'
\end{equation}
where $\textbf{U}'$ belongs to a space $V'$ which completes $\textbf{U}_h$ in $\textbf{U}$. The test function $\textbf{V}$ can too be decomposed likewise into the components $\textbf{V}_h$ and $\textbf{V}'$. Following the classical approach we substitute (13) in (4) as follows:
\begin{equation}
\begin{split}
(M \partial_t \textbf{U}_h, \textbf{V}_h)+ B(\textbf{U}_h,\textbf{V}_h) + (M \partial_t \textbf{U}', \textbf{V}_h)+ B(\textbf{U}',\textbf{V}_h) & 
= L(\textbf{V}_h) \hspace{1mm} \forall \textbf{V}_h \in V_h\\
(M \partial_t \textbf{U}_h, \textbf{V}')+ B(\textbf{U}_h,\textbf{V}') + (M \partial_t \textbf{U}', \textbf{V}')+ B(\textbf{U}',\textbf{V}') &
= L(\textbf{V}') \hspace{1mm} \forall \textbf{V}' \in V'
\end{split}
\end{equation}
Now integrating the second sub-equation of (14) and applying suitable boundary conditions we have
\begin{equation}
\int_{\tilde{\Omega}} \textbf{V}' \cdot [M \partial_t \textbf{U}' + \mathcal{L} \textbf{U}']= \int_{\tilde{\Omega}} \textbf{V}' \cdot [\textbf{F}-M \partial_t \textbf{U}_h + \mathcal{L} \textbf{U}_h]
\end{equation}
Consideration of continuous velocities, pressure at the inter-element boundaries makes the jump term vanishes in the above equation and we obtain over each element $\Omega_k$
\begin{equation}
M \partial_t \textbf{U}' + \mathcal{L} \textbf{U}'=\textbf{F}-M \partial_t \textbf{U}_h + \mathcal{L} \textbf{U}_h= The \hspace{1mm} Residual \hspace{1mm} (R(\textbf{U}_h))
\end{equation}
along with boundary condition on $\textbf{U}'$ which is not known. Now we solve for $\textbf{U}'$ the above equation. Let us assume an approximation of the differential operator in this way: $\mathcal{L} \approx \tau_k^{-1}$, where $\tau_k$ is a matrix whose components are known as stabilization parameters \cite{RefB}. Now substituting this in (16) and applying time discretization rule we have
\begin{equation}
\frac{1}{\frac{1+\theta}{2}dt} M(\textbf{U}'-\textbf{U}^{'n}) + \tau_k^{-1}  \textbf{U}'=R(\textbf{U}_h)
\end{equation}
this implies an expression for subgrid scale term in the following
\begin{equation}
\begin{split}
\textbf{U}' & =(\frac{1}{\frac{1+\theta}{2}dt} M + \tau_k^{-1} )^{-1}\{R(\textbf{U}_h)+ \frac{1}{\frac{1+\theta}{2}dt} M \textbf{U}^{'n}\} \\
& = \tau_k' (R(\textbf{U}_h)+ \textbf{d})
\end{split}
\end{equation}
where the matrices $ \tau_k'= (\frac{1}{\frac{1+\theta}{2}dt} M + \tau_k^{-1} )^{-1}$ and $\textbf{d}= \frac{1}{\frac{1+\theta}{2}dt} M \textbf{U}^{'n}$. \vspace{1mm}\\
Now substituting the result (16)  in the first sub-problem of  (14) followed by  integrating the fourth term once and substituting the expression (18) obtained for $\textbf{U}'$, we have 
\begin{equation}
\begin{split}
(M \partial_t \textbf{U}_h, \textbf{V}_h)+ B(\textbf{U}_h,\textbf{V}_h) + \sum_{k=1}^{n_{el}} (R(\textbf{U}_h)-\tau_k^{-1} \tau_k' (R(\textbf{U}_h)+ \textbf{d}),\textbf{V}_h)_k & \\
+ \sum_{k=1}^{n_{el}}( \tau_k' (R(\textbf{U}_h)+ \textbf{d}), \mathcal{L}^* \textbf{V}_h)_k= L(\textbf{V}_h) \hspace{1mm} \forall \textbf{V}_h \in V_h
\end{split}
\end{equation}
Now expanding the residual term we have the final form of stabilized formulation in the following
\begin{equation}
(M \partial_t \textbf{U}_h, \textbf{V}_h)+ B_{ASGS}(\textbf{U}_h,\textbf{V}_h) =  L_{ASGS}(\textbf{V}_h) \hspace{1mm} \forall \textbf{V}_h \in V_h
\end{equation}
where $B_{ASGS}(\textbf{U}_h, \textbf{V}_h)= B(\textbf{U}_h, \textbf{V}_h)+ \sum_{k=1}^{n_{el}} (\tau_k'(M\partial_t \textbf{U}_h + \mathcal{L}\textbf{U}_h-\textbf{d}), -\mathcal{L}^*\textbf{V}_h)_{\Omega_k}- \sum_{k=1}^{n_{el}}((I-\tau_k^{-1}\tau_k')(M\partial_t \textbf{U}_h + \mathcal{L}\textbf{U}_h), \textbf{V}_h)_{\Omega_k}-\sum_{k=1}^{n_{el}} (\tau_k^{-1}\tau_k' \textbf{d}, \textbf{V}_h)_{\Omega_k}$ \vspace{1 mm}\\
$L_{ASGS}(\textbf{V}_h)= L(\textbf{V}_h)+ \sum_{k=1}^{n_{el}}(\tau_k' \textbf{F}, -\mathcal{L}^*\textbf{V}_h)_{\Omega_k}- \sum_{k=1}^{n_{el}}((I-\tau_k^{-1}\tau_k')\textbf{F}, \textbf{V}_h)_{\Omega_k}$  \vspace{1 mm} \\
where the stabilization parameter $\tau_k$ is in matrix form as 
\[
\tau_k= diag(\tau_{1k},\tau_{1k},\tau_{2k}) =
  \begin{bmatrix}
    \tau_{1k} I_{2 \times 2} & 0  \\
    0 & \tau_{2k}  \\
  \end{bmatrix}
\]
and 
\[
\tau_k'= (\frac{1}{dt}M+ \tau_k^{-1})^{-1} =
  \begin{bmatrix}
    \frac{\tau_{1k} dt}{dt+ \rho \tau_{1k}}I_{2 \times 2} & 0  \\
    0 & \tau_{2k}  \\
  \end{bmatrix}\\
  = diag (\tau_{1k}',\tau_{1k}',\tau_{2k}') 
\]
and the matrix $\textbf{d}$= $\sum_{i=1}^{n+1}(\frac{1}{dt}M\tau_k')^i(\textbf{F} -M\partial_t \textbf{U}_h - \mathcal{L}(\textbf{U}_h))$ =$[\textbf{d}_1,d_2]^T$ \vspace{1 mm}\\
It can be easily observed that $d_2$ is always 0 due to the matrix M. Now the expressions of stabilization parameters \cite{RefB} are $\tau_{1k}=\frac{h^2}{c_1 \mu}$ and $\tau_{2k}=\frac{c_2}{\tau_{1k}}$, where $c_1,c_2$ are constants. \vspace{1 mm}\\
According to the time discretization rule we need to solve for $\textbf{U}^{n,\theta}_h$, $\forall \hspace{1mm} \textbf{V}_h \in V_h$
\begin{equation}
(M \partial_t \textbf{U}^{n,\theta}_h, \textbf{V}_h)+ B_{ASGS}(\textbf{U}_h^{n,\theta}, \textbf{V}_h) = L_{ASGS}(\textbf{V}_h) + (\textbf{TE}^{n,\theta},\textbf{V}_h)  \hspace{1mm}
\end{equation}

\subsection{Stability analysis of fully-discrete stabilized form}
Let us first mention discrete $inf$-$sup$ condition for discrete formulation.
\begin{lemma}{\textbf{Discrete inf-sup condition}:}
There exists a constant $\beta > 0$, independent of $h$ , such that 
\begin{center}
$\underset{q_h \in Q_h}{inf} \underset{\textbf{v}_h \in V^d_h}{sup} \frac{ \mid b(q_h,\textbf{v}_h) \mid}{\|\textbf{v}_h\|_1 \|q_h\|_0} \geq \beta$
\end{center}
\end{lemma}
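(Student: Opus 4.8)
The plan is to obtain the discrete inf-sup constant from the continuous one by means of Fortin's criterion, thereby avoiding any direct construction of a test velocity for each discrete pressure. First I would record the continuous Ladyzhenskaya--Babu\v{s}ka--Brezzi condition for the Stokes pair: there is $\beta_0>0$, depending only on $\Omega$, such that for every $q\in Q$ one has $\sup_{\textbf{v}\in V_{\textbf{u}}} b(\textbf{v},q)/\|\textbf{v}\|_1 \ge \beta_0\|q\|_0$, where $b(\textbf{v},q)=\int_\Omega (\nabla\cdot\textbf{v})\,q$ as defined above. Equivalently, the divergence maps $(H^1_0(\Omega))^2$ onto $L^2_0(\Omega)$ and possesses a bounded right inverse (the Bogovskii operator); this is the analytic input and is classical, so I would simply cite it.

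Next I would invoke Fortin's lemma, which reduces the discrete estimate with an $h$-independent constant to the existence of a linear operator $\Pi_h:V_{\textbf{u}}\to V_{\textbf{u}}^h$ enjoying two properties: (i) $b(\Pi_h\textbf{v}-\textbf{v},q_h)=0$ for all $q_h\in Q^h$, i.e. $\int_\Omega (\nabla\cdot(\Pi_h\textbf{v}-\textbf{v}))\,q_h=0$; and (ii) $\|\Pi_h\textbf{v}\|_1\le C\|\textbf{v}\|_1$ with $C$ independent of $h$. Granting such a $\Pi_h$, for a fixed $q_h\in Q^h$ I take the continuous maximiser $\textbf{v}$, observe that $b(\Pi_h\textbf{v},q_h)=b(\textbf{v},q_h)\ge\beta_0\|q_h\|_0\,\|\textbf{v}\|_1$, and divide by $\|\Pi_h\textbf{v}\|_1\le C\|\textbf{v}\|_1$ to land at the claimed bound with $\beta=\beta_0/C$.

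To build $\Pi_h$ I would start from a Scott--Zhang (or Cl\'ement) quasi-interpolation $\Pi_h^{0}$, which already furnishes the $H^1$-stability of (ii) and is well defined on $H^1$ functions with vanishing trace. This operator does not respect the divergence constraint, so I would add a local corrector valued in the internal bubble (and, if needed, edge-bubble) enrichment of $V_{\textbf{u}}^h$, chosen on each element so that the element-wise moments of $\nabla\cdot(\Pi_h\textbf{v}-\textbf{v})$ against the $\mathcal P^{l-1}$ pressure shape functions vanish; this secures (i). The correction is controlled because the bubble functions pair non-degenerately with local pressures, which is where shape-regularity of the triangulation enters.

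The main obstacle I anticipate is precisely re-establishing (ii) after the bubble correction: one must show that the corrector is bounded by $\|\textbf{v}\|_1$ uniformly in $h$. This rests on a local, element- or macroelement-level inf-sup estimate between the bubble space and $Q^h$, together with standard scaling and inverse inequalities on shape-regular elements; the delicate point is that the local pairing constant be bounded below independently of $h$, which for the $\mathcal P^{l}$--$\mathcal P^{l-1}$ pair (with $l\ge 2$) is guaranteed by a Boland--Nicolaides/Stenberg macroelement argument. If the direct Fortin construction proved awkward, the fallback would be to verify Stenberg's macroelement condition instead, which delivers the same conclusion with the same $h$-independent $\beta$.
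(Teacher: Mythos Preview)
Your route via Fortin's criterion---Scott--Zhang interpolant plus a local corrector, with Stenberg's macroelement argument as fallback---is a correct and standard way to obtain the discrete inf--sup constant for the Taylor--Hood pair $\mathcal P^l$--$\mathcal P^{l-1}$ (with $l\ge 2$), but it is not what the paper does. The paper argues more directly: for a given $q_h\in Q^h$ it invokes the continuous surjectivity of the divergence to produce $\textbf{v}\in(H^1_0(\Omega))^d$ with $\nabla\cdot\textbf{v}=q_h$ and $\|\textbf{v}\|_1\le C_1\|q_h\|_0$, then takes as discrete test function the $H^1$-stable $L^2$ projection $\pi_h\textbf{v}$ and splits $\|q_h\|_0^2=(q_h,\nabla\cdot(\textbf{v}-\pi_h\textbf{v}))+b(q_h,\pi_h\textbf{v})$; the defect term is bounded via $\|\textbf{v}-\pi_h\textbf{v}\|_1\le\bar C\|\textbf{v}\|_1$, giving $|b(q_h,\pi_h\textbf{v})|\ge(1-\bar C C_1)\|q_h\|_0^2$ and hence $\beta=(1-\bar C C_1)/(C_1C)$. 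The paper's argument is shorter and avoids bubble correctors and macroelements altogether, but it rests on the proviso $\bar C C_1<1$, which is simply assumed rather than verified (and the constants involved are not obviously small). Your Fortin construction, by enforcing $b(\Pi_h\textbf{v}-\textbf{v},q_h)=0$ exactly, sidesteps any such smallness hypothesis at the cost of a more elaborate operator. One caveat on your side: the corrector must land in $V_{\textbf{u}}^h$ itself, not a genuine bubble \emph{enrichment} of it; for Taylor--Hood this means exploiting the edge and interior degrees of freedom already present in $\mathcal P^l$, which is exactly what the macroelement fallback handles cleanly.
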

\begin{proof}
Let $\pi_h$ be $L^2$ projection on $V_h$ satisfying the following relations:\vspace{1mm}\\
(i)Stability estimates \cite{RefD2} for $u \in H^1(\Omega)$, $\|\pi_h u\|_0 \leq C \|u\|_0$ and $\| \pi_h u\|_1 \leq C \|u\|_1$  \\
(ii) Interpolation estimate \cite{RefD3} for $\textbf{v} \in (H^1(\Omega))^d$ $\| \bigtriangledown \cdot (\textbf{v}- \pi_h \textbf{v}) \| \leq \bar{C} h^r \| \bigtriangledown \cdot \textbf{v} \|_r$ for $0 \leq r \leq 1.$ \vspace{1mm}\\
Let $q_h \in Q_h$. From \cite{RefD1} there exists $\textbf{v} \in (H_0^1(\Omega))^d$ such that $\bigtriangledown \cdot \textbf{v} = q_h$ and $\|\textbf{v}\|_1 \leq C_1 \|q_h\|_0$
\begin{equation}
\begin{split}
\|q_h\|_0^2  = (q_h, \bigtriangledown \cdot \textbf{v})  & = (q_h, \bigtriangledown \cdot \textbf{v}- \bigtriangledown \cdot \pi_h \textbf{v}) + (q_h, \bigtriangledown \cdot \pi_h \textbf{v}) \\
& \leq \|q_h\|_0 \|\bigtriangledown \cdot (\textbf{v}- \textbf{v}_h)\|_0 + \mid b(q_h, \pi_h \textbf{v}) \mid \\
& \leq \|q_h\|_0 \|\textbf{v}- \textbf{v}_h\|_1 +  \mid b(q_h, \pi_h \textbf{v}) \mid \\
& \leq \|q_h\|_0 \bar{C} \|\textbf{v}\|_1 +  \mid b(q_h, \pi_h \textbf{v}) \mid \\
& \leq \bar{C} C_1 \|q_h\|_0^2 + \mid b(q_h, \pi_h \textbf{v}) \mid \\
\end{split}
\end{equation}
which implies
\begin{equation}
\begin{split}
 \mid b(q_h, \pi_h \textbf{v}) \mid & \geq (1-\bar{C} C_1) \|q_h\|_0^2 \\
& \geq \frac{(1-\bar{C} C_1)}{C_1} \|q_h\|_0 \|\textbf{v}\|_1 \\
& \geq  \frac{(1-\bar{C} C_1)}{C_1 C} \|q_h\|_0 \|\pi_h \textbf{v}\|_1
\end{split}
\end{equation}
Choose $\beta = \frac{(1-\bar{C} C_1)}{C_1 C}$ provided $\bar{C} C_1 < 1$
\end{proof}
For studying stability analysis of fully discrete formulation we have considered backward Euler time discretization rule for instance and arrived at the following: for given $\textbf{U}^n_h \in V_h$ find $\textbf{U}^{n+1}_h \in V_h$ such that $\forall \textbf{V}_h \in V_h$
\begin{multline}
(\frac{M}{dt} \textbf{U}^{n+1}_h, \textbf{V}_h) + B(\textbf{U}_h^{n+1},\textbf{V}_h) - \sum_{k=1}^{n_{el}} ((I-\tau^{-1}_k \tau_k')(\frac{M}{dt}  \textbf{U}^{n+1}_h+\mathcal{L}(\textbf{U}_h^{n+1})),\textbf{V}_h)_{\Omega_k}\\
+ \sum_{k=1}^{n_{el}}(\tau_k'(\frac{M}{dt} \textbf{U}^{n+1}_h+\mathcal{L}(\textbf{U}_h^{n+1})),  -\mathcal{L}^*( \textbf{V}_h))_{\Omega_k} = L_{ASGS}(\textbf{V}_h) +(\frac{M}{dt} \textbf{U}^{n}_h, \textbf{V}_h)- \\
\sum_{k=1}^{n_{el}} ((I-\tau^{-1}_k \tau_k')(\frac{M}{dt}  \textbf{U}^{n+1}_h,\textbf{V}_h)_{\Omega_k}+\sum_{k=1}^{n_{el}}(\tau^{-1}_k \tau_k' \textbf{d},\textbf{V}_h)_{\Omega_k}-\sum_{k=1}^{n_{el}}(\tau_k' \textbf{d}, \mathcal{L}^*(\textbf{V}_h))_{\Omega_k} \\
+\sum_{k=1}^{n_{el}}(\tau_k'(\frac{M}{dt} \textbf{U}^{n}_h, -\mathcal{L}^*(\textbf{V}_h))_{\Omega_k} 
\end{multline}
Now dropping the superscripts let us denote the unknown part in the following
\begin{multline}
\bar{B}_{ASGS}(\textbf{U}_h, \textbf{V}_h;dt):= B( \textbf{U}_h, \textbf{V}_h)  - \sum_{k=1}^{n_{el}} ((I-\tau^{-1}_k \tau_k')(\frac{M}{dt}  \textbf{U}_h+\mathcal{L}(\textbf{U}_h)),\textbf{V}_h)_{\Omega_k}\\
+ (\frac{M}{dt} \textbf{U}_h, \textbf{V}_h)+ \sum_{k=1}^{n_{el}}(\tau_k'(\frac{M}{dt} \textbf{U}_h+\mathcal{L}(\textbf{U}_h)),  -\mathcal{L}^*( \textbf{V}_h))_{\Omega_k}
\end{multline}

\begin{theorem}
For regular partitions satisfying inverse inequalities and assuming a condition $ dt>\bar{C} h^2$ there exists positive parameters $C_i$s (for i=1,2,3) depending upon $h$ such that \\
$\bar{B}_{ASGS}( \textbf{U}_h, \textbf{U}_h;dt) \geq C_1 \|\textbf{u}_h\|^2+ C_2 \|\bigtriangledown \textbf{u}_h\|^2+ C_3 \|\bigtriangledown p_h\|^2 $
\end{theorem}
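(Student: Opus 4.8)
My plan is to test the form against $\mathbf{V}_h=\mathbf{U}_h$, expand the four groups in the definition of $\bar B_{ASGS}$ using the explicit diagonal structure of $\tau_k$, $\tau_k'$ and $I-\tau_k^{-1}\tau_k'$, collect the resulting quadratics, and then control the few sign-indefinite contributions by Young's inequality, the inverse inequality, and the hypothesis $dt>\bar C h^2$. First I would record the elementary algebra: since $M=\mathrm{diag}(1,1,0)$, the velocity block of $\tau_k'$ is $\tau_{1k}'=\frac{dt\,\tau_{1k}}{dt+\tau_{1k}}$, the pressure block is $\tau_{2k}'=\tau_{2k}$, the pressure block of $I-\tau_k^{-1}\tau_k'$ vanishes, and its velocity block equals $\gamma_k:=\frac{\tau_{1k}}{dt+\tau_{1k}}$, so $\tau_{1k}'=dt\,\gamma_k$. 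Taking $\mathbf{V}_h=\mathbf{U}_h$ the Galerkin part collapses to $B(\mathbf{U}_h,\mathbf{U}_h)=\mu\|\nabla\mathbf{u}_h\|^2$, because the couplings $-b(\mathbf{u}_h,p_h)+b(\mathbf{u}_h,p_h)$ cancel, and the mass term gives $\frac1{dt}\|\mathbf{u}_h\|^2$.

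Writing $\frac{M}{dt}\mathbf{U}_h+\mathcal L\mathbf{U}_h=[\tfrac{\mathbf u_h}{dt}-\mu\Delta\mathbf u_h+\nabla p_h,\ \nabla\cdot\mathbf u_h]^{T}$ and $-\mathcal L^*\mathbf{U}_h=[\mu\Delta\mathbf u_h+\nabla p_h,\ \nabla\cdot\mathbf u_h]^{T}$, I would expand the $(I-\tau_k^{-1}\tau_k')$–sum (call it Term A) and the $\tau_k'$–sum against $-\mathcal L^*$ (Term B). The decisive simplifications are that the $\pm\int\nabla p_h\cdot\mathbf u_h$ contributions from A and B cancel, the $\mp\mu\int\Delta\mathbf u_h\cdot\nabla p_h$ terms inside B cancel by antisymmetry, and the two $\Delta\mathbf u_h\cdot\mathbf u_h$ pieces add to $2\mu\gamma_k\int_{\Omega_k}\Delta\mathbf u_h\cdot\mathbf u_h$. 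After this bookkeeping the form reduces to manifestly positive quadratics, namely $\mu\|\nabla\mathbf u_h\|^2$, $\sum_k\frac{1}{dt+\tau_{1k}}\|\mathbf u_h\|_{\Omega_k}^2$ (the surviving combination of the mass term and the $-\gamma_k\frac1{dt}\|\mathbf u_h\|^2$ of Term A), $\sum_k\tau_{1k}'\|\nabla p_h\|_{\Omega_k}^2$, and $\sum_k\tau_{2k}\|\nabla\cdot\mathbf u_h\|_{\Omega_k}^2$, plus exactly two indefinite terms: $2\mu\gamma_k\int_{\Omega_k}\Delta\mathbf u_h\cdot\mathbf u_h$ and $-\gamma_k dt\,\mu^2\|\Delta\mathbf u_h\|_{\Omega_k}^2$.

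These two Laplacian terms are the heart of the argument. I would bound the cross term from below by Young's inequality, $2\mu\gamma_k\int_{\Omega_k}\Delta\mathbf u_h\cdot\mathbf u_h\ge-\gamma_k\big(dt\,\mu^2\|\Delta\mathbf u_h\|_{\Omega_k}^2+\frac1{dt}\|\mathbf u_h\|_{\Omega_k}^2\big)$, so that the negative mass lands on $\|\mathbf u_h\|^2$ and the negative Laplacian mass accumulates to $-2\gamma_k dt\,\mu^2\|\Delta\mathbf u_h\|_{\Omega_k}^2$. The inverse inequality $\|\Delta\mathbf u_h\|_{\Omega_k}\le C_{inv}h_k^{-1}\|\nabla\mathbf u_h\|_{\Omega_k}$ together with $\gamma_k dt=\tau_{1k}'\le\tau_{1k}=\frac{h^2}{c_1\mu}$ and mesh regularity then turns this into $-\frac{2C_{inv}^2\sigma\mu}{c_1}\|\nabla\mathbf u_h\|^2$, absorbed into $\mu\|\nabla\mathbf u_h\|^2$ provided $c_1>2C_{inv}^2\sigma$. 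The surviving $\|\mathbf u_h\|^2$ coefficient is $\frac1{dt+\tau_{1k}}\big(1-\frac{\tau_{1k}}{dt}\big)$, positive precisely because $dt>\bar C h^2$ with $\bar C\sim\frac1{c_1\mu}$. Collecting constants gives $C_2=\mu\big(1-\frac{2C_{inv}^2\sigma}{c_1}\big)$, $C_1=\min_k\frac1{dt+\tau_{1k}}\big(1-\frac{\tau_{1k}}{dt}\big)$, $C_3=\min_k\tau_{1k}'$, all positive and $h$-dependent as claimed, and the divergence term is simply dropped.

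The genuine obstacle is the simultaneous control of the two Laplacian terms: one must spend the viscous energy $\mu\|\nabla\mathbf u_h\|^2$ and the mass energy $\frac1{dt}\|\mathbf u_h\|^2$ to dominate them without over-drawing either, and the balance closes only because the inverse inequality ties $\|\Delta\mathbf u_h\|$ to $h^{-1}\|\nabla\mathbf u_h\|$ while the scaling $\tau_{1k}=h^2/(c_1\mu)$ supplies the compensating $h^2$. Keeping $C_1>0$ forces $dt>\bar C h^2$ and keeping $C_2>0$ forces $c_1$ large; I would check these are compatible, which they are, since enlarging $c_1$ relaxes the $dt$ threshold.
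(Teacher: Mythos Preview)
Your proof is correct and follows essentially the same strategy as the paper: expand $\bar B_{ASGS}(\mathbf U_h,\mathbf U_h;dt)$, control the indefinite Laplacian contributions with Young's inequality and the inverse estimate $\|\Delta\mathbf u_h\|\le C_I h^{-1}\|\nabla\mathbf u_h\|$, and close using $\tau_{1k}\sim h^2/\mu$ together with $dt>\bar C h^2$. The only tactical difference is that you first identify the exact cancellations of the $(\nabla p_h,\mathbf u_h)$ and $(\Delta\mathbf u_h,\nabla p_h)$ cross terms between the $(I-\tau_k^{-1}\tau_k')$-sum and the $\tau_k'$-sum, whereas the paper bounds each such term separately via Cauchy--Schwarz and Young with an auxiliary parameter $\epsilon_1$ satisfying $dt>\epsilon_1^{-1}>\bar C h^2$; your bookkeeping therefore yields slightly cleaner constants but the mechanism is identical.
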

\begin{proof}
Expanding the terms of (20) we have
\begin{multline}
B_{ASGS}( \textbf{U}_h, \textbf{U}_h;dt)= \frac{1}{dt} \|\textbf{u}_h\|^2 + a_{S}(\textbf{u}_h, \textbf{u}_h) 
- \sum_{k=1}^{n_{el}}(\frac{\tau_{1k}}{dt + \tau_{1k}} I_{2 \times 2}( \frac{\textbf{u}_h}{dt} - \mu \Delta \textbf{u}_h+ \\
\bigtriangledown p_h), \textbf{u}_h)_{\Omega_k}+ 
 \sum_{k=1}^{n_{el}}(\frac{ \tau_{1k} dt}{dt +  \tau_{1k}} I_{2 \times 2}( \frac{\textbf{u}_h}{dt} - \mu \Delta \textbf{u}_h+ \bigtriangledown p_h),  \mu \Delta \textbf{u}_h+ \bigtriangledown p_h)_{\Omega_k} \\
\end{multline}
let us look at the terms separately. 
 $a_{S}(\textbf{u}_h, \textbf{u}_h) = \mu \| \bigtriangledown \textbf{u}_h \|^2$ \vspace{1mm} \\
Now applying Cauchy-Schwarz inequality on the next terms we have \vspace{1mm}\\
$( \frac{  \tau_{1k} }{ dt +  \tau_{1k} } I_{2 \times 2} \frac{\textbf{u}_h}{dt}, \textbf{u}_h)_{\Omega_k} \leq \frac{1}{dt} (\sum_{k=1}^{n_{el}}\frac{  \mid \tau_k \mid}{\mid dt + \tau_k \mid }) \|\textbf{u}_h\|_k^2  $ \vspace{1mm}\\
 and applying inverse inequalities in the following intermediate steps we have
\begin{multline}
( \frac{  \tau_{1k} dt}{ dt +  \tau_{1k} } I_{2 \times 2} \frac{\textbf{u}_h}{dt}, \mu \Delta \textbf{u}_h+ \bigtriangledown p_h)_{\Omega_k}  
\leq   \frac{  \tau_{1k} }{ dt +  \tau_{1k} } ( \textbf{u}_h,  \mu \Delta \textbf{u}_h+ \bigtriangledown p_h)_{\Omega_k}  \\
\leq   \frac{  \tau_{1k} }{ dt +\tau_{1k} } \{  \mu \|\textbf{u}_h\|_k \|\Delta \textbf{u}_h\|_k + \|\textbf{u}_h\|_k \|\bigtriangledown p_h\|_k \}\\
\leq    \frac{  \tau_{1k} }{ dt +  \tau_{1k} }  \{\epsilon_1 \|\textbf{u}_h\|_k^2 + \frac{\mu^2 C_I^2}{2 \epsilon_1 h^2} \|\bigtriangledown \textbf{u}_h\|_k^2+ \frac{1}{2 \epsilon_1} \|\bigtriangledown p_h\|_k^2\}
\end{multline}
Now the next term similarly as above
\begin{multline}
( \frac{ \tau_{1k}}{dt+  \tau_{1k}} I_{2 \times 2}( - \mu \Delta \textbf{u}_h+ \bigtriangledown p_h), \textbf{u}_h)_{\Omega_k} \\
\leq    \frac{ \tau_{1k} }{ dt + \tau_{1k} }  \{\epsilon_1 \|\textbf{u}_h\|_k^2 + \frac{\mu^2 C_I^2}{2 \epsilon_1 h^2} \|\bigtriangledown \textbf{u}_h\|_k^2+ \frac{1}{2 \epsilon_1} \|\bigtriangledown p_h\|_k^2\}
\end{multline}
and
\begin{multline}
(\frac{ \tau_{1k} dt}{dt + \rho \tau_{1k}} I_{2 \times 2}( - \mu \Delta \textbf{u}_h+ \bigtriangledown p_h), \mu \Delta \textbf{u}_h+ \bigtriangledown p_h)_{\Omega_k} \hspace{2mm} \\
\geq \frac{ \tau_{1k} dt}{dt + \rho \tau_{1k}} \{ \| \bigtriangledown p_h\|_k^2- \mu^2 \| \Delta \textbf{u}_h\|_k^2 \} 
\geq \frac{ \tau_{1k} dt}{dt + \rho \tau_{1k}} \{\| \bigtriangledown p_h\|_k^2- \frac{\mu^2 C_I^2}{h^2} \| \bigtriangledown \textbf{u}_h\|_k^2 \} \hspace{55mm}
\end{multline}
Now combining all the results
\begin{equation}
\begin{split}
\bar{B}_{ASGS}(\textbf{U}_h,\textbf{U}_h;dt) & \geq C_1 \|\textbf{u}_h\|^2 + C_2 \|\bigtriangledown \textbf{u}_h\|^2 + C_3 \|\bigtriangledown p_h\|^2
\end{split}
\end{equation}
where $C_1 = \frac{ (1-2 \epsilon_1 \tau_1)}{dt+ \tau_1}=\frac{\tau_1^{-1}-2 \epsilon_1}{dt \tau_1^{-1}+ 1}=\frac{c_1 \frac{\mu}{h^2}-2 \epsilon_1}{dt \tau_1^{-1}+1}= \frac{c_1 \mu -2 \epsilon_1 h^2}{h^2(dt \tau_1^{-1}+1)}$\vspace{1mm}\\
$C_2=\tau_1[\frac{\mu^2}{h^2} (c_1 -\frac{ C_I^2}{\epsilon_1(dt+ \tau_1)}-\frac{ C_I^2 dt}{(dt+ \tau_1)} )] \geq \tau_1 \bar{C} \tau_1^{-1} =\bar{C}_1$ \vspace{1mm} \\
$C_3= \frac{\tau_1}{dt+ \tau_1}(dt-\frac{1}{\epsilon_1})=\frac{(dt- \epsilon_1^{-1})}{dt \tau_1^{-1}+1}$ \vspace{1mm}\\
Choose arbitrary $\epsilon_1$ in such a way that $ dt > \epsilon_1^{-1}>\bar{C} h^2$ holds and therefore $C_1$ and $C_2$ can be made positive. $\bar{C}_1$ is positive constants by choice of stabilization parameters.
\end{proof}

\section{Error estimation}
We start this section with introducing the projection operator corresponding to each unknown variable followed by notation of error and it's component wise splitting. Later we derive $aposteriori$ error estimates.

\subsection{Projection operators : Error splitting}
Let us introduce the projection operator for each of these error components.\vspace{1 mm}\\
(I)For any $\textbf{u} \in H^2(\Omega) \times H^2(\Omega) $ we assume that there exists an interpolation $I^h_{\textbf{u}}:  H^2(\Omega) \times H^2(\Omega) \longrightarrow  V_{\textbf{u}}^h $ satisfying $b(\textbf{u}-I^h_{\textbf{u}}\textbf{u}, q_h)=0$ \hspace{2mm} $\forall q_h \in Q_h$ \vspace{2mm}\\
(II) Let $I^h_p: H^1(\Omega) \longrightarrow Q_s^h$ be the $L^2$ orthogonal projection given by \\ $\int_{\Omega}(p-I^h_pp)q_h=0$  \hspace{1mm} $\forall q_h \in Q_h$ and for any $p \in H^1(\Omega)$ \vspace{2mm}\\
Let $\textbf{e}=(e_{\textbf{u}},e_p)$ denote the error where the components are $e_{\textbf{u}}=(e_{u1},e_{u2})= (u_1-u_{1h}, u_2-u_{2h})$ and  $e_p= (p-p_h)$. Now each component of the error can be split into two parts interpolation part, $E^I$ and auxiliary part, $E^A$ as follows: \vspace{1mm}\\
$e_{u1}=(u_1-u_{1h})=(u_1-I^h_{u1}u_1)+(I^h_{u1}u_1-u_{1h})= E^{I}_{u1}+ E^{A}_{u1}$ \vspace{1mm}\\
Similarly, $e_{u2}=E^{I}_{u2}+ E^{A}_{u2}$ and
$e_{p}=E^{I}_{p}+ E^{A}_{p}$. \vspace{2mm}\\
At this point let us mention the standard \textbf{interpolation estimation} result \cite{RefA} in the following: for any exact solution with regularity upto (m+1)
\begin{equation}
\|v-I^h_v v\|_l = \|E^I_v\|_l \leq C(p,\Omega) h^{m+1-l} \|v\|_{m+1} 
\end{equation}
where l ($\leq m+1$) is a positive integer and C is a constant depending on m and the domain. For l=0 and 1 it  implies standard $L^2(\Omega)$ and $H^1(\Omega)$ norms respectively. For simplicity we will use $\| \cdot \|$ instead of $\| \cdot \|_0$ to denote $L^2(\Omega)$ norm.
Now we put some results using the properties of projection operators and these results will be used in error estimations.
\begin{result}
\begin{equation}
(\frac{\partial}{\partial t} E^{I,n}_{\textbf{u}}, \textbf{v}_h)=0 \hspace{2mm} \textbf{v}_h \in V_h
\end{equation}
\end{result}

\begin{result}
For any given auxiliary error $E^{A,n}$ and unknown $E^{A,n+1}$
\begin{equation}
(\frac{\partial}{\partial t} E^{A,n}, E^{A,n,\theta}) \geq  \frac{1}{2 dt} (\|E^{A,n+1}\|^2- \|E^{A,n}\|^2)
\end{equation}
\end{result}

\subsection{Aposteriori error estimation}
In this section we are going to derive residual based $aposteriori$ error estimation. Before deriving error estimations let us define  required norms the estimations. Let us consider the space $\tilde{\textbf{V}} $ := $L^2(0,T; V_s)\bigcap L^{\infty}(0,T; Q_s)$ and it's associated norm is denoted by $\tilde{\textbf{V}}$-norm. For the functions $g_1, g_2, g_3$ belonging to the spaces $L^2(0,T; L^2(\Omega))$, $L^2(0,T; H_0^1(\Omega))$,  $\tilde{\textbf{V}}$ respectively norms over these spaces, abbreviated as $L^2(L^2)$, $L^2(H^1)$, $\tilde{\textbf{V}}$ are defined in the following
\begin{equation}
\begin{split}
\|g_1\|_{L^2(L^2)}^2 &= \sum_{n=0}^{N-1} \int_{t^n}^{t^{n+1}} \int_{\Omega} \mid g_1^{n,\theta} \mid^2 dt \\
\|g_2\|_{L^2(H^1)}^2 &= \sum_{n=0}^{N-1} \int_{t^n}^{t^{n+1}} (\int_{\Omega} \mid g_2^{n,\theta} \mid^2  +  \int_{\Omega} \mid \frac{\partial g_2}{\partial x}^{n,\theta} \mid^2  +  \int_{\Omega} \mid \frac{\partial g_2}{\partial y}^{n,\theta} \mid^2 )dt \\
\|g_3\|_{\tilde{\textbf{V}}}^2 & = \underset{0\leq n \leq N}{max} \|g_3^n\|^2 + \|g_3\|_{L^2(H^1)}^2\\
\end{split}
\end{equation} 
\begin{theorem} 
For computed velocity $\textbf{u}_h$ and pressure $p_h$ belonging to $V_{\textbf{u}}^h$ and $Q_h$ satisfying (32)-(33), assume $dt$ is sufficiently small and positive, and sufficient regularity of exact solution in equation (1). Then there exists a constant $\bar{C}$, independent of $\textbf{u},p$ and depending on the residual such that
\begin{equation}
\|\textbf{u}-\textbf{u}_h\|^2_{\tilde{\textbf{V}}} + \|p-p_h\|_{L^2(L^2)}^2  \leq \bar{C}(\textbf{R}) (h^2+ dt^{2r})
\end{equation}
where $\textbf{R}$ is the residual vector and
\begin{equation}
    r=
    \begin{cases}
      1, & \text{if}\ \theta=1 \\
      2, & \text{if}\ \theta=0
    \end{cases}
  \end{equation}
\end{theorem}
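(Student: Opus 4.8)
The plan is to derive an error equation, split the error into an interpolation part and an auxiliary part that lies in $V_h$, test the error equation against the auxiliary part, and then combine the coercivity of Theorem~1, the identities of Result~1 and Result~2, the interpolation estimate, and the truncation-error bound through a discrete Gronwall argument. First I would subtract the computable scheme from the relation satisfied by the time-discretized exact solution. Since both carry the same stabilized form $B_{ASGS}$ and the same load $L_{ASGS}$, their difference is a Galerkin-type relation for $\textbf{e}^{n,\theta}=\textbf{U}^{n,\theta}-\textbf{U}_h^{n,\theta}$ in which the truncation error $\textbf{TE}^{n,\theta}$ is the only forcing. Writing $\textbf{e}=\textbf{E}^I+\textbf{E}^A$ as in Section~5.1 and recalling $\textbf{E}^A\in V_h$, I would choose the test function $\textbf{V}_h=\textbf{E}^{A,n,\theta}$.

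For the resulting identity I would treat each term separately. The time-derivative term splits as $(M\partial_t\textbf{E}^{I,n,\theta},\textbf{E}^{A,n,\theta})+(M\partial_t\textbf{E}^{A,n,\theta},\textbf{E}^{A,n,\theta})$; the first summand vanishes by Result~1, and the second is bounded below by $\frac{1}{2dt}(\|\textbf{E}^{A,n+1}\|^2-\|\textbf{E}^{A,n}\|^2)$ via Result~2, giving a telescoping quantity after summation in $n$. The stabilized form splits likewise: its diagonal part $\bar{B}_{ASGS}(\textbf{E}^{A,n,\theta},\textbf{E}^{A,n,\theta})$ is bounded from below by $C_1\|E^A_{\textbf{u}}\|^2+C_2\|\bigtriangledown E^A_{\textbf{u}}\|^2+C_3\|\bigtriangledown E^A_p\|^2$ through Theorem~1, whereas the cross part $\bar{B}_{ASGS}(\textbf{E}^{I,n,\theta},\textbf{E}^{A,n,\theta})$ together with the forcing $(\textbf{TE}^{n,\theta},\textbf{E}^{A,n,\theta})$ is moved to the right-hand side and controlled by Cauchy--Schwarz and Young's inequality so that the $\textbf{E}^A$-factors are absorbed into the coercive terms. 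Here the interpolation estimate $\|E^I_v\|_l\le C\,h^{m+1-l}\|v\|_{m+1}$ supplies the powers of $h$ for each factor of $\textbf{E}^I$ and its derivatives, while the truncation-error bound supplies $\|\textbf{TE}^{n,\theta}\|^2\le\tilde C\,dt^{2r}$ with $r$ as in the statement. The inverse inequalities enter because the stabilization contributions contain $\mathcal{L}^*\textbf{E}^{A,n,\theta}$, and hence second derivatives of the discrete velocity.

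After absorption I would sum over $n=0,\dots,N-1$ (after multiplying by $dt$). The telescoped time term produces $\max_{n}\|\textbf{E}^{A,n}\|^2$, the coercive velocity term produces the $L^2(H^1)$ contribution of the $\tilde{\textbf{V}}$-norm, and the accumulated interpolation and truncation contributions produce the right-hand side $\bar C(\textbf{R})(h^2+dt^{2r})$; any remaining lower-order term is controlled by the discrete Gronwall inequality, which is where the hypothesis that $dt$ is sufficiently small --- consistent with the condition $dt>\bar C h^2$ of Theorem~1 --- is used. To recover the pressure in the $L^2(L^2)$ norm, which the coercivity controls only through $\|\bigtriangledown p_h\|$, I would invoke the discrete inf-sup condition of Lemma~1: selecting in the error equation a test velocity that realizes the supremum bounds $\|E^A_p\|$ by $\beta^{-1}$ times the velocity, time-derivative and truncation terms already estimated. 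A final triangle inequality, combining these auxiliary bounds with the interpolation estimate applied to $\textbf{E}^I$ and $E^I_p$, then yields the asserted bound in the $\tilde{\textbf{V}}$- and $L^2(L^2)$-norms.

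The step I expect to be the main obstacle is the joint control of the cross term $\bar{B}_{ASGS}(\textbf{E}^{I,n,\theta},\textbf{E}^{A,n,\theta})$ and of the pressure through inf-sup. Because the stabilization couples velocity and pressure via $\tau_k'$ and $\mathcal{L}^*$, the inverse-inequality factors $h^{-1}$ must be shown to be compensated by the interpolation powers $h^{m+1-l}$ so that the final spatial rate stays at $h^2$; and the inf-sup step must be arranged so that its right-hand side only involves quantities already bounded by $h^2+dt^{2r}$, rather than reintroducing $\|E^A_p\|$. Carrying the $h$- and $dt$-dependence of the coercivity constants $C_1,C_2,C_3$ through the Gronwall estimate while keeping the final constant independent of $\textbf{u}$ and $p$ is the delicate bookkeeping underlying the whole proof.
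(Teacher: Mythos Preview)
Your plan has a structural gap at its first step. You assert that subtracting the scheme from the exact relation yields a Galerkin-type error equation in which ``the truncation error $\textbf{TE}^{n,\theta}$ is the only forcing'' because ``both carry the same stabilized form $B_{ASGS}$ and the same load $L_{ASGS}$''. That is not so: the exact solution satisfies only the \emph{unstabilized} form $(M\partial_t\textbf{U},\textbf{V}_h)+B(\textbf{U},\textbf{V}_h)=L(\textbf{V}_h)+(\textbf{TE},\textbf{V}_h)$, whereas the stabilization terms in $B_{ASGS}$ and $L_{ASGS}$ are built from the \emph{discrete} residual $R(\textbf{U}_h)=\textbf{F}-M\partial_t\textbf{U}_h-\mathcal{L}\textbf{U}_h$ and from $\textbf{d}$. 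Upon subtraction one therefore obtains $B(\textbf{e},\textbf{V}_h)$ on the left and, on the right, a sum of stabilization contributions that depend explicitly on $\textbf{R}^{n,\theta}$ and $\textbf{d}$ in addition to $\textbf{TE}^{n,\theta}$ (this is the paper's equation (44)). That residual-dependent right-hand side is exactly how $\textbf{R}$ enters the final constant $\bar C(\textbf{R})$ and makes the estimate \emph{aposteriori}. Your outline, which estimates the cross term $\bar B_{ASGS}(\textbf{E}^{I},\textbf{E}^{A})$ purely via interpolation bounds, would at best produce an \emph{apriori} constant depending on Sobolev norms of $\textbf{u},p$, not on the computable residual.

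Correspondingly, the paper does not invoke Theorem~1 in the error analysis at all. It uses the elementary coercivity $\mu|\textbf{v}|_1^2\le a_S(\textbf{v},\textbf{v})=B(\textbf{V},\textbf{V})$ applied to the \emph{full} error $\textbf{e}$, then splits the \emph{test function} as $\textbf{e}=\textbf{E}^I+\textbf{E}^A$ to obtain $RHS=RHS^I+RHS^A$. The piece $RHS^I$ is handled by the residual identity (equation (42)), producing the $h^2(\bar C_1\|\textbf{R}_1^{n,\theta}\|+\bar C_2\|R_2^{n,\theta}\|)$ contribution; the piece $RHS^A$ is handled via (44), where each stabilization term is bounded by constants times $\|\textbf{R}_1^{n,\theta}\|$ using $\tau_1=\mathcal O(h^2)$ and element-wise bounds on the discrete functions. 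No discrete Gronwall step is used: time summation and telescoping, together with a choice of the free parameters making the left-hand coefficients positive, suffice. Your second part---recovering $\|E^A_p\|_{L^2(L^2)}$ through the discrete inf--sup condition of Lemma~1 and closing with a triangle inequality against $E^I_p$---does match the paper's argument.
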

\begin{proof}
We estimate $aposteriori$ error by dividing the procedure into two parts. In the first part we find error bound corresponding to $velocity$ and $concentration$ followed by the second part estimating error associated with the $pressure$ term. Let us first introduce the residual vector corresponding to each equations 
\[
\textbf{R}=
  \begin{bmatrix}
 \textbf{f}-\{\frac{\partial \textbf{u}_h}{\partial t}- \mu \Delta \textbf{u}_h + \bigtriangledown p_h\} \\
    -\bigtriangledown \cdot \textbf{u}_h 
  \end{bmatrix}
   = 
  \begin{bmatrix}
 \textbf{R}_1\\
  R_2 
  \end{bmatrix}
\]
\textbf{First part}: We have $\forall \textbf{V} \in V$
\begin{equation}
\mu \mid \textbf{v} \mid_1^2  \leq B(\textbf{V},\textbf{V})= a_{S}(\textbf{v},\textbf{v})
\end{equation} 
Since $\textbf{e} \in V$ we substitute the errors $e_{\textbf{u}}$ into the above relation and adding few terms in both sides we have
\begin{multline}
 (\frac{\partial e_{\textbf{u}}}{\partial t},e_{\textbf{u}})+\mu_l \|e_{\textbf{u}}\|_1^2 
 \leq   (\frac{\partial e_{\textbf{u}}}{\partial t},e_{\textbf{u}})+a_{S}(e_\textbf{u},e_\textbf{u})
 +b(e_\textbf{u},e_p)-b(e_\textbf{u},e_p)+ \mu_l \|e_{\textbf{u}}\|^2
\end{multline} 
Now first we will find a lower bound of $LHS$ and then upper bound for $RHS$ and finally combining them we will get $aposteriori$ error estimate. Applying (33) on the first term of $LHS$ we have the following relations
\begin{equation}
\begin{split}
(\frac{e_{\textbf{u}}^{n+1}-e_{\textbf{u}}^n}{dt},e_{\textbf{u}}^{n,\theta}) & \geq \frac{1}{2 dt}(\|e_{\textbf{u}}^{n+1}\|^2-\|e_{\textbf{u}}^n\|^2) \\
\end{split}
\end{equation}
Hence
\begin{equation}
 \frac{1}{2 dt}(\|e_{\textbf{u}}^{n+1}\|^2-\|e_{\textbf{u}}^n\|^2)+\mu \|e_{\textbf{u}}^{n,\theta}\|_1^2  \leq LHS \leq RHS
\end{equation}
Now our job is to find upper bound for $RHS$ and to reach at the desired estimates let us divide it into two broad parts by splitting errors in each of the terms in the following way:
\begin{equation}
\begin{split}
RHS &= [(\frac{e_{\textbf{u}}^{n+1}-e_{\textbf{u}}^n}{dt},E_{\textbf{u}}^{I,n,\theta}) +  a_{S}(e^{n,\theta}_\textbf{u},E^{I,n,\theta}_\textbf{u}) + b(e^{n,\theta}_\textbf{u},E^{I,n,\theta}_{p})-  b(E^{I,n,\theta}_\textbf{u},e^{n,\theta}_p)  ] 
\\
& \quad + [(\frac{e_{\textbf{u}}^{n+1}-e_{\textbf{u}}^n}{dt},E_{\textbf{u}}^{A,n,\theta}) + a_{S}(e^{n,\theta}_\textbf{u},E^{A,n,\theta}_\textbf{u})+ b(e^{n,\theta}_\textbf{u} ,E^{A,n,\theta}_{p}) \\
& \quad -  b(E^{A,n,\theta}_\textbf{u},e^{n,\theta}_p) ]+ \mu \|e_{\textbf{u}}^{n,\theta}\|^2 \\
&= RHS^I + RHS^A  + \mu \|e_{\textbf{u}}^{n,\theta}\|^2\\
\end{split}
\end{equation}
Our aim is to bring residual into context and for this purpose $RHS^I$ involving interpolation error terms can be estimated as follows: we have $\forall$ $\textbf{V} \in V$
\begin{multline}
 (\frac{e_{\textbf{u}}^{n+1}- e_{\textbf{u}}^{n}}{dt}, \textbf{v})+ a_{S}(e^{n,\theta}_{\textbf{u}},\textbf{v})-b(\textbf{v}, e_p^{n,\theta}) + (\bigtriangledown \cdot e_\textbf{u}^{n,\theta},q)
= \int_{\Omega} \textbf{R}_1^{n,\theta} \cdot \textbf{v} + \int_{\Omega} R_2^{n,\theta} q
\end{multline}
Now substituting $\textbf{v} ,q$ in the above expressions by $E^{I,n,\theta}_{\textbf{u}}, E^{I,n,\theta}_{p}$ respectively, we have the $RHS^I$ as,
\begin{equation}
\begin{split}
RHS^I & = \int_{\Omega} (\textbf{R}_1^{n,\theta} \cdot E^{I,n,\theta}_{\textbf{u}}+ R_2^{n,\theta} E^{I,n,\theta}_{p})\\
& \leq h^2 \{ \|\textbf{R}_1^{n,\theta}\| (\frac{1+\theta}{2} \| \textbf{u}^{n+1} \|_2 + \frac{1-\theta}{2} \| \textbf{u}^n \|_2) + C_2 \|R_2^{n,\theta}\| (\frac{1+\theta}{2} \|p^{n+1}\|_1  \\
& \quad  + \frac{1-\theta}{2} \|p^n\|_1) \}\\
& \leq h^2( \bar{C}_1 \|\textbf{R}_1^{n,\theta}\| + \bar{C}_2 \|R_2^{n,\theta}\|) \\
\end{split}
\end{equation}
The parameters $\bar{C}_i$, for i=1,2,3,4, are coming from imposing assumption  \textbf{(iv)}. Now we are going to estimate of $RHS^A$. For that we employ $subgrid$ formulation (20). Subtracting (20) from the variational finite element formulation satisfied by the exact solution we have $\forall \textbf{V}_h \in V_h $
\begin{multline}
 (\frac{e_{\textbf{u}}^{n+1}-e_{\textbf{u}}^n}{dt}, \textbf{v}_h) 
+a_{S}(e_{\textbf{u}}^{n,\theta},\textbf{v}_h)- b(\textbf{v}_h, e^{n,\theta}_p)+ b(e_{\textbf{u}}^{n,\theta}, q_h) \\
 = \sum_{k=1}^{n_{el}} \{(\tau_k'(\textbf{R}^{n,\theta}+\textbf{d}), -\mathcal{L}^* \textbf{V}_h)_{\Omega_k} - ((I-\tau_k^{-1}\tau_k) \textbf{R}^{n,\theta}, \textbf{V}_h)_{\Omega_k} + (\tau_k^{-1}\tau_k \textbf{d}, \textbf{V}_h)_{\Omega_k} \} \\
  +(\textbf{TE}^{n,\theta}, \textbf{v}_{h})\\
\end{multline} 
where the column vector $\textbf{d}$ = $[\textbf{d}_1,d_2]^T$= $\sum_{i=1}^{n+1}(\frac{1}{dt}M\tau_k')^i(\textbf{F} -M\partial_t \textbf{U}_h - \mathcal{L}\textbf{U}_h)=\sum_{i=1}^{n+1}(\frac{1}{dt}M\tau_k')^i \textbf{R}$. Hence we have the components $\textbf{d}_1= (\sum_{i=1}^{n+1}(\frac{1}{dt} \tau_1')^i) I_{d \times d} \textbf{R}_1^{n,\theta}$ and $d_2=0$.  \vspace{1mm} \\ 
Now expanding the terms in (44) further and substituting $\textbf{V}_h$ by $(E^{A,n,\theta}_{\textbf{u}},  E^{A,n,\theta}_{p})$ in the above equation we have $RHS^A$ as follows 
\begin{equation}
\begin{split}
RHS^A & = \sum_{k=1}^{n_{el}}[( \tau_1' I_{2 \times 2} \{\textbf{R}_1^{n,\theta}+\textbf{d}_1 \}, \mu \Delta E^{A,n,\theta}_{\textbf{u}}+ \bigtriangledown E^{A,n,\theta}_{p})_{\Omega_k} + \tau_2'(R_2^{n,\theta}, \bigtriangledown \cdot E^{A,n,\theta}_{\textbf{u}})_{\Omega_k}  \\
& \quad  + ((1-\tau_1^{-1}\tau_1') I_{2 \times 2} \textbf{R}_1^{n,\theta}, E^{A,n,\theta}_{\textbf{u}})_{\Omega_k} + (\tau_1^{-1}\tau_1'I_{2 \times 2}\textbf{d}_1,E^{A,n,\theta}_{\textbf{u}})_{\Omega_k}] + \\
& \quad (\textbf{TE}_1^{n,\theta},E^{A,n,\theta}_{\textbf{u}})  
\end{split}
\end{equation}
Now we estimate each term separately. Before going to further calculations let us mention an important observation: By the virtue of the choices of the finite element spaces $V_{\textbf{u}}^h$ and $Q^h$, we can clearly say that $\Omega_k$ every function belonging to that spaces and their first and second order derivatives all are bounded  over each element sub-domain. We can always find positive finite real numbers to bound each of the functions over element sub-domain. Applying Cauchy-Schwarz inequality followed by this observation on the following terms of (45) we have
\begin{equation}
\begin{split}
 \sum_{k=1}^{n_{el}}( \tau_1' I_{2 \times 2} \textbf{R}_1^{n,\theta}, \mu \Delta E^{A,n,\theta}_{\textbf{u}}+ \bigtriangledown E^{A,n,\theta}_{p})_{\Omega_k}  &  \leq \frac{\mid \tau_1 \mid T}{T_0-  C_{\tau_1}} (\sum_{k=1}^{n_{el}} D_{B_{1k}})\|\textbf{R}_1^{n,\theta}\|  \\
\sum_{k=1}^{n_{el}}((1-\tau_1^{-1} \tau_1')I_{2 \times 2} \textbf{R}_1^{n,\theta}, E^{A,n,\theta}_{\textbf{u}})_{\Omega_k} & \leq \frac{ \mid \tau_1 \mid}{T_0- C_{\tau_1}} (\sum_{i=1}^2 \sum_{k=1}^{n_{el}}B_{1k}^i) \|\textbf{R}_1^{n,\theta}\| \\
\end{split}
\end{equation} 
where the constants $D_{B_{1k}}$ and $B_{1k}^i$ appear due to imposing bounds on  $ \Delta E^{A,n,\theta}_{\textbf{u}}$, $\bigtriangledown E^{A,n,\theta}_{p}$ and $E^{A,n,\theta}_{\textbf{u}}$ over each sub-domain $\Omega_k$ and $C_{\tau_1}$ is upper bound on $\tau_1$. Now the terms containing the components of $\textbf{d}$ can be estimated in the following way:
\begin{equation}
\begin{split}
&\sum_{k=1}^{n_{el}} (\tau_1' I_{2 \times 2} \textbf{d}_1, \mu \Delta E^{A,n,\theta}_{\textbf{u}}+ \bigtriangledown E^{A,n,\theta}_{p})_{\Omega_k} \\
& = \sum_{k=1}^{n_{el}}( \tau_1' \{\sum_{i=1}^{n+1}(\frac{1}{dt} \tau_1')^i\} I_{2 \times 2}\textbf{R}_1^{n,\theta}, \mu \Delta E^{A,n,\theta}_{\textbf{u}}+ \bigtriangledown E^{A,n,\theta}_{p})_{\Omega_k} \\
& \leq \sum_{k=1}^{n_{el}}(\tau_1' \{\sum_{i=1}^{\infty}(\frac{1}{dt} \tau_1')^i\} I_{2 \times 2} \textbf{R}_1^{n,\theta},\mu \Delta E^{A,n,\theta}_{\textbf{u}}+ \bigtriangledown E^{A,n,\theta}_{p})_{\Omega_k} \\
& = \sum_{k=1}^{n_{el}}( \frac{\tau_1^2}{dt+ \tau_1} I_{2 \times 2} \textbf{R}_1^{n,\theta}, \mu \Delta E^{A,n,\theta}_{\textbf{u}}+ \bigtriangledown E^{A,n,\theta}_{p})_{\Omega_k}  \leq \frac{\mid \tau_1 \mid C_{\tau_1}}{T_0-C_{\tau_1}} (\sum_{k=1}^{n_{el}} D_{B_{1k}}) \|\textbf{R}_1^{n,\theta}\| \\
& \sum_{k=1}^{n_{el}} (\tau_1^{-1}\tau_1' I_{2 \times 2} \textbf{d}_1,E^{A,n,\theta}_{\textbf{u}})_{\Omega_k}  \leq \frac{ \mid \tau_1 \mid}{T_0-  C_{\tau_1}} (\sum_{i=1}^2 \sum_{k=1}^{n_{el}} B_{1k}^i) \|\textbf{R}_1^{n,\theta}\|
\end{split}
\end{equation}
 Now we estimate the remaining terms as follows:
\begin{equation}
\begin{split} 
 \sum_{k=1}^{n_{el}} \tau_2'(R_2^{n,\theta}, \bigtriangledown \cdot E^{A,n,\theta}_{\textbf{u}})_{\Omega_k} & =  \sum_{k=1}^{n_{el}} \tau_2' (R_2^{n,\theta}, \bigtriangledown \cdot e^{n,\theta}_{\textbf{u}})_{\Omega_k} -  \sum_{k=1}^{n_{el}} \tau_2' (R_2^{n,\theta}, \bigtriangledown \cdot E^{I,n,\theta}_{\textbf{u}})_{\Omega_k}  \\
& =  \sum_{k=1}^{n_{el}} \tau_2'  (\bigtriangledown \cdot e^{n,\theta}_{\textbf{u}}, \bigtriangledown \cdot e^{n,\theta}_{\textbf{u}})_{\Omega_k} -  \sum_{k=1}^{n_{el}} \tau_2' ( \bigtriangledown \cdot e^{n,\theta}_{\textbf{u}}, \bigtriangledown \cdot E^{I,n,\theta}_{\textbf{u}})_{\Omega_k}  \\
 \end{split}
\end{equation}
Applying Cauchy-Schwarz inequality followed by Young's inequality in the following steps we have:
\begin{equation}
\begin{split} 
& \leq  C_{\tau_2} (\sum_{i=1}^2 \|\frac{\partial e^{n,\theta}_{ui}}{\partial x_i}\|)^2 +  C_{\tau_2} (\sum_{i=1}^2 \|\frac{\partial e^{n,\theta}_{ui}}{\partial x_i}\|) (\sum_{i=1}^2 \|\frac{\partial E^{I,n,\theta}_{ui}}{\partial x_i}\| )  \\
 & \leq 2 C_{\tau_2} \sum_{i=1}^2 \|\frac{\partial e^{n,\theta}_{ui}}{\partial x_i}\|^2 + \epsilon_1' C_{\tau_2} \sum_{i=1}^2 \|\frac{\partial e^{n,\theta}_{ui}}{\partial x_i}\|^2  +  \frac{C_{\tau_2}}{\epsilon_1'} \sum_{i=1}^2 \|\frac{\partial E^{I,n,\theta}_{ui}}{\partial x_i}\|^2   \\
& \leq C_{\tau_2}[ (2+ \epsilon_1')  \sum_{i=1}^2 \| e^{n,\theta}_{ui}\|_1^2  + \frac{h^2}{\epsilon_1'} \sum_{i=1}^2 (\frac{1+\theta}{2} \|u_i^{n+1}\|_2+ \frac{1-\theta}{2} \|u_i^{n}\|_2)^2 ]  \\
& \leq   (2+ \epsilon_1') C_{\tau_2} \| e^{n,\theta}_{\textbf{u}}\|_1^2 + h^2 \frac{C_{\tau_2}}{\epsilon_1'} \bar{C}_5 
 \end{split}
\end{equation}
where the parameter $\bar{C}_5$ comes for applying assumption $\textbf{(iv)}$. Now the estimation of terms involving trancation is in the following:
\begin{equation}
\begin{split}
(\textbf{TE}^{n,\theta}, E^{A,n,\theta}_{\textbf{u}}) & = (\textbf{TE}^{n,\theta}, e^{n,\theta}_{\textbf{u}}) - (\textbf{TE}^{n,\theta}, E^{I,n,\theta}_\textbf{u}) \\
& \leq \frac{1}{\epsilon_2'} \|\textbf{TE}^{n,\theta}\|^2 + \frac{\epsilon_2'}{2} (\|e^{n,\theta}_{\textbf{u}}\|^2 + \|E^{I,n,\theta}_{\textbf{u}} \|^2) \\
& \leq  \frac{1}{\epsilon_2'} \|\textbf{TE}^{n,\theta}\|^2 +  \frac{\epsilon_2'}{2} \{ \|e^{n,\theta}_{\textbf{u}}\|^2 + h^4 (\frac{1+\theta}{2} \|\textbf{u}^{n+1}\|_2 + \frac{1-\theta}{2} \|\textbf{u}^n\|_2)^2 \} \\
& \leq \frac{1}{\epsilon_2'} \|\textbf{TE}^{n,\theta}\|^2 +  \frac{\epsilon_2'}{2} \|e^{n,\theta}_{\textbf{u}}\|^2_1 +  h^4 \frac{\epsilon_2'}{2} \bar{C}_5
\end{split}
\end{equation}
and this completes estimating all the terms of $RHS^A$. Now applying $Poincare$ inequality on last term in $RHS$ in (41) we have
\begin{equation}
\mu \|e_{\textbf{u}}^{n,\theta}\|^2 \leq \mu C_P \mid e_{\textbf{u}}^{n,\theta} \mid_1^2 \leq \mu C_P \|e_{\textbf{u}}^{n,\theta}\|^2
\end{equation}
Now this completes finding bounds for each term in the $RHS$ of (41). Putting common terms all together in the left hand side and multiplying them by $2$ and  then integrating both sides over $(t^n,t^{n+1})$ for $n=0,...,(N-1)$ , we  have 
\begin{multline}
\sum_{n=0}^{N-1} ( \|e^{n+1}_{\textbf{u}}\|^2-\|e^{n}_{\textbf{u}}\|^2)+ \{ 2\mu_l-  2(2+\epsilon_1') C_{\tau_2} - 2\mu C_P -\epsilon_2' \}  \sum_{n=0}^{N-1} \int_{t^n}^{t^{n+1}} \|e^{n,\theta}_{\textbf{u}}\|_1^2 dt \\
\leq h^2 \sum_{n=0}^{N-1} \int_{t^n}^{t^{n+1}} \{ \bar{C}_1 \|\textbf{R}_1^{n,\theta}\| + \bar{C}_2 \|R_2^{n,\theta}\|+ \frac{2 C_{\tau_2}}{\epsilon_1'} \bar{C}_5 + h^2 \epsilon_2' \bar{C}_5 \}dt+  \\
\quad   2 \frac{\mid \tau_1 \mid }{T_0- C_{\tau_1}} [ (T+C_{\tau_1}) (\sum_{k=1}^{n_{el}} D_{B_{1k}}) + 2  \sum_{i=1}^2 \sum_{k=1}^{n_{el}} B_{1k}^i ]  \sum_{n=0}^{N-1} \int_{t^n}^{t^{n+1}} \|\textbf{R}_1^{n,\theta}\|^2 dt  \\
+
\frac{2}{\epsilon_2'} \sum_{n=0}^{N-1} \int_{t^n}^{t^{n+1}}\|\textbf{TE}^{n,\theta}_1\|^2dt \hspace{40mm}
\end{multline}
Choose the arbitrary parameters including $C_{\tau_2}$ and the $Poincare$ constant $C_P$ in such a way that all the coefficients in the left hand side can be made positive.
Then taking minimum over the coefficients in the left hand side let us divide both sides by them. Using property (10) associated with both implicit time discretisation scheme  and the fact that $\tau_1$ is of order $h^2$, we have arrived at the following relation:
\begin{equation}
\boxed{\|e_{\textbf{u}}\|_{\tilde{\textbf{V}}}^2   \leq C'(\textbf{R}) (h^2+dt^{2r})}
\end{equation}
where
\begin{equation}
    r=
    \begin{cases}
      1, & \text{if}\ \theta=1 \hspace{1mm} for  \hspace{1mm} backward  \hspace{1mm} Euler  \hspace{1mm} rule \\
      2, & \text{if}\ \theta=0  \hspace{1mm} for  \hspace{1mm} Crank-Nicolson  \hspace{1mm} scheme
    \end{cases}
  \end{equation}
 This only completes one part of $aposteriori$ estimation and in the next part we combine the corresponding pressure part.\vspace{2mm}\\
\textbf{Second part}: By Galerkin orthogonality followed by property (I) of projection operator we have
\begin{equation}
b(\textbf{v}_h, I_hp-p_h)  = (\frac{\partial e_{\textbf{u}}}{\partial t}, \textbf{v}_{h})+ a_{S}(e_{\textbf{u}},\textbf{v}_h)
\end{equation}
Integrating both sides with respect time and later applying $Cauchy-Schwarz$'s inequality, $Young$'s inequality and the above result (53) we have
\begin{equation}
\begin{split}
\sum_{n=0}^{N-1} \int_{t^n}^{t^{n+1}} b(\textbf{v}_h, E_p^{A,n,\theta}) dt & = \sum_{n=0}^{N-1} \int_{t^n}^{t^{n+1}} \{ (\frac{e_{\textbf{u}}^{n+1}-e_{\textbf{u}}^n}{dt},\textbf{v}_{h})+ a_{S}(e_{\textbf{u}}^{n,\theta},\textbf{v}_h) \}dt\\
&  \leq \bar{C}'(\textbf{R})(h^2+ dt^{2r}) \|\textbf{v}_h\|_1
\end{split}
\end{equation} 
Now applying this result on the following relation
\begin{equation}
\begin{split}
\|I_hp-p_h\|_{L^2(L^2)}^2& = \|E_p^A\|_{L^2(L^2)}^2 \\
& = \sum_{n=0}^{N-1} \int_{t^n}^{t^{n+1}} \|E_p^{A,n,\theta}\|^2 dt \\
& \leq  \sum_{n=0}^{N-1} \int_{t^n}^{t^{n+1}} \underset{\textbf{v}_h}{sup} \frac{b(\textbf{v}_h, E_p^{A,n,\theta})}{\|\textbf{v}_h\|_1} dt \\
& \leq \bar{C}'(\textbf{R})(h^2+ dt^{2r})
\end{split}
\end{equation}
Now combining the results obtained in the first and second part and applying interpolation estimate on pressure interpolation term $E^I_p$, we finally arrive at the following $aposteriori$ error estimate:
\begin{equation}
\boxed{\|\textbf{u}-\textbf{u}_h\|^2_{\tilde{\textbf{V}}} + \|p-p_h\|_{L^2(L^2)}^2   \leq \bar{C}(\textbf{R}) (h^2+ dt^{2r})}
\end{equation} 
\end{proof}
\begin{remark}
These estimations clearly imply that the scheme is $first$ order convergent in space with respect to total norm, whereas in time it is $first$ order convergent for backward Euler time discretization scheme and $second$ order convergent for Crank-Nicolson method.
\end{remark}

\section{Numerical Experiment}
In this section we have numerically verified the convergence rate established theoretically  under stabilized method in the previous section. For simplicity we have considered bounded square domain $\Omega$= (0,1) $\times$ (0,1). We have taken continuous piecewise linear finite element(P1) space into account for approximating both the velocity and pressure variables and applied backward Euler time discretization rule.
 Let us mention here the exact solutions :
$\textbf{u}=(e^{-t}x^2(x-1)^2y(y-1)(2y-1), -e^{-t}y^2(y-1)^2x(x-1)(2x-1))$ 
and  $p=e^{-t}(2x-1)(2y-1)$.  \vspace{1mm} \\
The viscosity coefficients $\mu=0.1$
The stabilization parameters: $\tau_1=\frac{h^2}{4 \mu}$ and $\tau_2=\frac{2}{\tau_1}$  \\

 \begin{table}[]
\centering
\begin{tabular}{| *{6}{c|} }
    \hline
 Time &  Grid     & \multicolumn{2}{c|}{ASGS method}\\
      \hline       
  step & size     & Total error & RoC  \\
 \hline
0.1& 10 $\times$ 10 & 0.0651611 &   \\
   \hline
 0.05&    20 $\times$ 20 & 0.0349207 & 0.899928  \\  
     \hline
  0.025&   40 $\times$ 40 & 0.0182159 & 0.93883  \\   
    \hline         
 0.0125&   80 $\times$ 80 & 0.00931727 & 0.96722  \\  
     \hline         
 0.00625&   160 $\times$ 160 & 0.0047026 & 0.986447  \\    
    \hline      
\end{tabular}
\caption{Total error and Rate of convergence(RoC) under  $ASGS$ method for transient Stokes model at $T=1$}
    \end{table}

\begin{remark}
Table 1 is showing total error under $ASGS$ method at each mesh size and at different time steps.Clearly the order of convergence under $ASGS$ method is 1, which justifies theoretically established result.
\end{remark}

\section{Conclusion}
The paper presents $ASGS$ stabilized finite element analysis of  transient Stokes fluid flow model. Whereas this paper in one hand has elaborately derived the stabilized formulation, on other hand it has presented stability analysis of the stabilized formulation where the stabilized bilinear form is shown coercive and discrete $inf$-$sup$ condition has been established to ensure pressure stability. As well as $aposteriori$ error estimation has been carried out elaborately. It is essential to mention that the norm employed for error estimation consists of the full norms corresponding to each variable belonging to their respective spaces. Therefore it provides a wholesome information about convergence of the method. Theoretically the rate of convergence for $aposteriori$ error estimation turns out to be $O(h)$ in space for different time discretization rules. In numerical section the theoretically established result is well verified through a test case problem. This piece of work will definitely be useful for studying adaptivity and control of solution error. \vspace{1cm}\\

\end{document}